\newtheorem{theorem}{Theorem}
\newtheorem{corollary}[theorem]{Corollary}
\newtheorem{lemma}[theorem]{Lemma}
\newtheorem{proposition}[theorem]{Proposition}
\newtheorem{remark}[theorem]{Remark}
\newenvironment{proof}[1][Proof]{\noindent\textbf{#1.} }{\ \rule{0.5em}{0.5em}}
\begin{document}

\title{Structures of Adjoint-Stable Algebras over Factorizable Hopf Algebras}
\author{Zhimin Liu \thanks{%
E-mail: zhiminliu13@fudan.edu.cn} \thanks{Project funded by China Postdoctoral Science Foundation grant 2019M661327}\hspace{1cm} Shenglin Zhu
\thanks{CONTACT: mazhusl@fudan.edu.cn} \\
Fudan University, Shanghai 200433, China}

\date{}
\maketitle

\begin{abstract}
  For a quasi-triangular Hopf algebra $\left( H,R\right) $, there is a
  notion of transmuted braided group $H_{R}$ of $H$ introduced by
  Majid. The transmuted braided group $H_{R}$ is a Hopf algebra in the
  braided category $ _{H}\mathcal{M}$. The $R$-adjoint-stable algebra
  associated with any simple left $H_{R}$-comodule is defined by the
  authors, and is used to characterize the structure of all
  irreducible Yetter-Drinfeld modules in ${}_{H}^{H} \mathcal{YD}$. In
  this note, we prove for a semisimple factorizable Hopf algebra
  $ \left( H,R\right) $ that any simple subcoalgebra of $H_R$ is
  $H$-stable and the $R$-adjoint-stable algebra for any simple left
  $H_R$-comodule is anti-isomorphic to $H$. As an application, we
  characterize all irreducible Yetter-Drinfeld modules.
\end{abstract}

\textbf{KEYWORDS}: Factorizable Hopf Algebra, Yetter-Drinfeld Module, R-adjoint-stable Algebra

\textbf{2000 MATHEMATICS SUBJECT CLASSIFICATION}: 16W30

\section{Introduction}

Let $\left( H,R\right) $ be a quasi-triangular Hopf algebra. In the braided
category $_{H}\mathcal{M}$ of finite dimensional left $H$-modules, a Hopf
algebra $H_{R}$, named `the transmuted braided group' of $H$, was
constructed by Majid~\cite{Majid1991Braided}, and it is proved by Zhu-Zhang~%
\cite{Zhu2015Braided} that ${}_{H}^{H}\mathcal{YD}\cong {}_{H}^{H_{R}}%
\mathcal{M}$. As an object of ${}_{H}^{H_{R}}\mathcal{M}$, each
Yetter-Drinfeld module $V\in {}_{H}^{H}\mathcal{YD}$ gives rise to a
subcoalgebra $D_{V}$ of $H_{R}$. If $k$ is a field and $\left( H,R\right)
=\left( kG,1\otimes 1\right) $ is the group algebra of a finite group $G$,
then the associated subcoalgebra $D_{V}$ of an irreducible Yetter-Drinfeld
module $V$ is the subcoalgebra linearly spanned by a conjugacy class $C$ of $%
G$, and $V$ can be characterized by a module over $kC\left( g\right) $,
where $g\in C$ and $C\left( g\right) $ is the centralizer of $g$ in $G$ (see~\cite{dijkgraaf1992quasi,gould1993quantum}.) For
a semisimple and cosemisimple quasi-triangular Hopf algebra $\left(
H,R\right) $, the authors~\cite{LiuZhu2019On} introduced the notion of $R$%
-adjoint-stable algebras and used it to characterized irreducible
Yetter-Drinfeld modules.

The notion of factorizable Hopf algebra was introduced by Reshetikhin and
Semenov-Tian-Shansky~\cite{reshetikhin1988quantum}. A quasi-triangular Hopf
algebra $\left( H,R\right) $ is called factorizable if the linear map $%
H^{\ast }\rightarrow H$, $p\mapsto \left\langle p,{R_{1}}^{2}{R_{2}}%
^{1}\right\rangle {R_{1}}^{1}{R_{2}}^{2}$ is bijective. When $\left(
H,R\right) $ is factorizable, Lyubashenko and Majid~\cite%
{Lyubashenko1994Braided,Majid1995foundations} show that the braided Hopf
algebra $H_{R}$ is isomorphic to its dual Hopf algebra (see also~\cite%
{Bulacu2004Factorizable} for quasi-Hopf case).

In this short paper, we prove for a semisimple factorizable
Hopf algebra $ \left( H,R\right) $ that any simple subcoalgebra of
$H_R$ is $H$-stable, and the $R$-adjoint-stable algebra for any simple
left $H_R$-comodule is anti-isomorphic to $H$. As an application, we
characterize all irreducible Yetter-Drinfeld modules.

\section{Preliminaries}

We first recall some preliminaries and fix some notations. Throughout this
paper, $k$ is a field, all (co)algebras are over $k$, and $\left( H,R\right)
$ is always a finite dimensional quasi-triangular Hopf algebra. For detailed
knowledge of coalgebras, Yetter-Drinfeld modules, and transmuted braided
groups, one can refer to~\cite%
{MR0252485,Yetter1990Quantum,Majid1991Braided,Zhu2015Braided,LiuZhu2019On}.

For self-containedness, we recall some basic notions here. The transmuted
braided group $H_{R}$ is a Hopf algebra in the braided tensor category $_{H}%
\mathcal{M}$. Explicitly, $H_{R}$ is the left $H$-module algebra $H$ with
the left adjoint action $\cdot_{ad}$. Its coproduct $\Delta _{R}$ and
antipode $S_{R}$ are
\begin{equation*}
\Delta _{R}(h)=h_{(1)}S\left( R^{2}\right) \otimes R^{1}\cdot _{ad}h_{(2)},\
S_{R}\left( h\right) =R^{2}S\left( R^{1}\cdot _{ad}h\right) \text{, where }%
h\in H\text{.}
\end{equation*}

Any module $V\in {}_{H}^{H}\mathcal{YD}$ is a left $H_{R}$-comodule via $%
\rho _{R}:V\rightarrow H_{R}\otimes V$ by%
\begin{equation}
\rho _{R}\left( v\right) =v_{\left\langle -1\right\rangle }S\left(
R^{2}\right) \otimes R^{1}v_{\left\langle 0\right\rangle },\text{ where }%
v\in V. \label{eq_rho_R}
\end{equation}%
This structure makes $V$ an object of $_{H}^{H_{R}}\mathcal{M}$. Let
\begin{equation*}
D_{V}=\limfunc{span}\left\{ v_{\left\langle -1\right\rangle }S\left(
R^{2}\right) \left\langle v^{\ast },R^{1}v_{\left\langle 0\right\rangle
}\right\rangle \mid v\in V,v^{\ast }\in V^{\ast }\right\} .
\end{equation*}%
Then $D_{V}$ is an $H$-stable subcoalgebra of $H_{R}$, which is also a
Yetter-Drinfeld submodule of $\left( H,\cdot _{ad},\Delta \right) \in
{}_{H}^{H}\mathcal{YD}$. We call it the subcoalgebra of $H_{R}$ associated
with $V$. It is proved in~\cite{LiuZhu2019On} that the set of
Yetter-Drinfeld submodules of $H$ coincides with the set of $H$-stable
subcoalgebras of $H_{R}$. If $V\in {}_{H}^{H}\mathcal{YD}$ is irreducible
then $D_{V}$ is a minimal $H$-stable subcoalgebra of $H_{R}$, as well as an
irreducible Yetter-Drinfeld submodule of $H$.

Let $W$ be a finite dimensional left $H_{R}$-comodule. Then $H\otimes W$ is
a natural object in ${}_{H}^{H_{R}}\mathcal{M}$ with the $H$-action and $%
H_{R}$-coaction given by
\begin{equation}
h^{\prime }\left( h\otimes w\right) =\left( h^{\prime }h\otimes w\right) ,%
\text{ }\rho \left( h\otimes w\right) =h_{\left( 1\right) }\cdot
_{ad}w_{^{\left\langle -1\right\rangle }}\otimes h_{\left( 2\right) }\otimes
w_{\left\langle 0\right\rangle },  \label{stru HotimesW}
\end{equation}%
where $h,h^{\prime }\in H$, $w\in W$. The object $H\otimes W$ was used in
~\cite{LiuZhu2019On} to characterize the structure of irreducible
Yetter-Drinfeld modules over $H$.

Let $D=D_{H\otimes W}$ be the subcoalgebra of $H_{R}$ associated with $%
H\otimes W$, then $H\otimes W\in {}_{H}^{D}\mathcal{M}$, and $\rho\left(W\right)\subseteq D\otimes W$. On $N_{W}=W^{\ast }\square _{D}\left( H\otimes W\right) $,
where $W^{\ast }$ is the canonical right $D$-comodule induced from the left $%
D$-coaction of $W$, there is a natural algebra structure via
\begin{equation*}
x\circ y=\sum_{l=1}^{n}\sum_{j=1}^{m}v_{l}^{\ast }\otimes g_{l}h_{j}\otimes
\left\langle w_{j}^{\ast },v_{l}\right\rangle w_{j},
\end{equation*}%
where $x=\sum_{j=1}^{m}w_{j}^{\ast }\otimes h_{j}\otimes w_{j}$, $%
y=\sum_{l=1}^{n}v_{l}^{\ast }\otimes g_{l}\otimes v_{l}$ are elements in $%
N_{W}$. The algebra $N_{W}$ is termed the $R$-adjoint-stable algebra of $W$.

Define a left $N_{W}$ module structure on $H\otimes W$ by $\left(
\sum_{j}w_{j}^{\ast }\otimes h_{j}\otimes w_{j}\right) \cdot \left( h\otimes
w\right) =\sum_{j}hh_{j}\otimes w_{j}\left\langle w_{j}^{\ast
},w\right\rangle $. For any right $N_{W}$-module $U$, $U\otimes
_{N_{W}}\left( H\otimes W\right) \in {}_{H}^{D}\mathcal{M}$ with the $H$%
-module structure and $D$-comodule induced by that on $H\otimes W$. Let $%
V\in {}_{H}^{D}\mathcal{M}$, $W^{\ast }\square _{D}V$ is a right $N_{W}$%
-module via
\begin{equation}
\left( \sum_{i}w_{i}^{\prime \ast }\otimes v_{i}\right) \cdot \left(
\sum_{j}w_{j}^{\ast }\otimes h_{j}\otimes w_{j}\right)
=\sum_{i}\sum_{j}w_{j}^{\ast }\otimes h_{j}v_{i}\left\langle w_{i}^{\prime
\ast },w_{j}\right\rangle ,  \label{W*_cpd_V_as_N_W_module}
\end{equation}%
for $\sum_{i}w_{i}^{\prime \ast }\otimes v_{i}\in W^{\ast }\square _{D}V$, $%
\sum_{j}w_{j}^{\ast }\otimes h_{j}\otimes w_{j}\in N_{W}$.

The following lemma is~\cite[Theorem 5.6]{LiuZhu2019On}, which we need later
on.

\begin{lemma}
\label{lemma LiuZhu}Let $\left( H,R\right) $ be a semisimple and
cosemisimple quasi-triangular Hopf algebra, and $W$ be a finite dimensional
left $H_{R}$-comodule. Write $D=D_{H\otimes W}$, then the functors
\begin{equation*}
W^{\ast }\square _{D}\bullet :{}_{H}^{D}\mathcal{M}\rightarrow \mathcal{M}%
_{N_{W}}\text{ and }\bullet \otimes _{N_{W}}\left( H\otimes W\right) :%
\mathcal{M}_{N_{W}}\rightarrow {}_{H}^{D}\mathcal{M}
\end{equation*}%
define a category equivalence.
\end{lemma}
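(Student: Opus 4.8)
The plan is to treat the two functors as an adjoint pair and prove the equivalence by exhibiting the unit and counit and showing both are natural isomorphisms. First I would record well-definedness: for $V\in{}_{H}^{D}\mathcal{M}$ the cotensor $W^{\ast}\square_{D}V$ carries the right $N_{W}$-action of~\eqref{W*_cpd_V_as_N_W_module}, and for $U\in\mathcal{M}_{N_{W}}$ the balanced product $U\otimes_{N_{W}}(H\otimes W)$ inherits the $H$-module and $D$-comodule structure of $H\otimes W$ from~\eqref{stru HotimesW}, since the left $N_{W}$-action on $H\otimes W$ is $H$-linear and $D$-colinear and hence descends to the quotient. Because $W$ is finite dimensional I would also identify $W^{\ast}\square_{D}V\cong\operatorname{Hom}^{D}(W,V)$, the space of left $D$-comodule maps $W\to V$; this reframes the first functor conceptually and streamlines the bookkeeping that follows.

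Next I would write down the counit $\varepsilon_{V}\colon(W^{\ast}\square_{D}V)\otimes_{N_{W}}(H\otimes W)\to V$ as the evaluation-and-action map, contracting the $W^{\ast}$ factor against the $W$ factor and applying the $H$-action on $V$, and the unit $\eta_{U}\colon U\to W^{\ast}\square_{D}(U\otimes_{N_{W}}(H\otimes W))$ as the natural coevaluation map built from a dual basis of $W$. I would then verify that these are morphisms in the respective categories and satisfy the triangle identities. With the explicit formulas above in hand this part is routine, provided one keeps track of the $D$-colinearity conditions at each stage.

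The substance of the proof is showing $\varepsilon_{V}$ and $\eta_{U}$ are isomorphisms, and here the two standing hypotheses are used decisively. Cosemisimplicity makes $D$ a cosemisimple coalgebra, so the cotensor functor $W^{\ast}\square_{D}(-)$ is exact and ${}_{H}^{D}\mathcal{M}$ is semisimple; semisimplicity then forces $N_{W}$, the endomorphism-type algebra of $H\otimes W$ in this semisimple category, to be a semisimple algebra, so $\mathcal{M}_{N_{W}}$ is semisimple as well. Both isomorphism claims then reduce to checking on simple objects and comparing multiplicities. The essential structural input is that $D=D_{H\otimes W}$ is precisely the coefficient coalgebra of the $H_{R}$-comodule $H\otimes W$: every simple $D$-comodule already occurs inside $H\otimes W$, which makes $H\otimes W$ a generator of ${}_{H}^{D}\mathcal{M}$ and the functor $W^{\ast}\square_{D}(-)\cong\operatorname{Hom}^{D}(W,-)$ faithful. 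This faithfulness, together with exactness, is what upgrades the adjunction to an equivalence.

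I expect the main obstacle to be the counit $\varepsilon_{V}$: proving it is surjective, i.e.\ that $H\otimes W$ together with the cotensor recovers all of $V$, is exactly where the generating property of $H\otimes W$, and hence the specific choice $D=D_{H\otimes W}$, must be exploited; injectivity then follows from a dimension count over the semisimple components, or equivalently from faithful exactness of $W^{\ast}\square_{D}(-)$. Throughout, the compatibility of the $H$-module and $D$-comodule structures—the Yetter--Drinfeld-type condition built into ${}_{H}^{D}\mathcal{M}$—is the bookkeeping that must be carried carefully through every identification.
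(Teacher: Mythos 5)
The first thing to note is that this paper contains no proof of the lemma to compare against: it is quoted verbatim as \cite[Theorem 5.6]{LiuZhu2019On}, so your attempt can only be measured against correctness and the method of the cited source. Your Morita-style skeleton --- tensor--Hom adjunction with evaluation counit and coevaluation unit, semisimplicity of all categories in sight, reduction of both isomorphism claims to simple objects --- is the right architecture and is in the spirit of the cited theorem. But the decisive step, the generator property, is mis-justified as you state it. Surjectivity of $\varepsilon_{S}$ on a simple $S\in{}_{H}^{D}\mathcal{M}$ reduces to $W^{\ast}\square_{D}S\cong\operatorname{Hom}^{D}(W,S)\neq 0$: it is $W$ itself, not $H\otimes W$, that must share a simple $D$-comodule constituent with $S$, because via the induction adjunction $\operatorname{Hom}_{{}_{H}^{D}\mathcal{M}}(H\otimes W,S)\cong\operatorname{Hom}^{D}(W,S)$ the contraction is against the $W^{\ast}$ factor only. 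The fact you invoke --- every simple $D$-comodule occurs in $H\otimes W$, which is true by the definition $D=D_{H\otimes W}$ --- gives faithfulness of $\operatorname{Hom}^{D}(H\otimes W,-)$ as plain comodule maps, which is not the functor at hand. Since $D=D_{H\otimes W}=H\cdot_{ad}D_{W}$ is in general strictly larger than $D_{W}$ (see the Remark after Lemma~\ref{lemma HotWisoWotH}), it is not formal that every simple object of ${}_{H}^{D}\mathcal{M}$ has a $D$-comodule constituent in common with $W$, and without this your counit could a priori vanish on some simple block.

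The missing argument runs through $H$-stable subcoalgebras: decompose $D$ into minimal $H$-stable subcoalgebras $D=\bigoplus_{j}E_{j}$; each simple subcoalgebra $C_{i}$ of $D_{W}$ lies in a single summand $E_{j(i)}$, and since $D=\sum_{i}H\cdot_{ad}C_{i}$ with $H\cdot_{ad}C_{i}\subseteq E_{j(i)}$, every $E_{j}$ must contain some $C_{i}$; on the other hand the coefficient coalgebra of a simple object $S$ of ${}_{H}^{D}\mathcal{M}$ is a minimal $H$-stable subcoalgebra, hence equals some $E_{j}\supseteq C_{i}$, so $S$ contains the simple $D$-comodule attached to $C_{i}$, which is a constituent of $W$. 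Only with this (or an equivalent computation, e.g.\ one using integrals as in the cited source) does surjectivity of $\varepsilon_{S}$, and then the multiplicity count you describe, go through. A secondary gap of the same flavor: you assert that cosemisimplicity of $H$ makes $D$ cosemisimple, but $D$ is a subcoalgebra of $(H,\Delta_{R})$, not of $(H,\Delta)$; cosemisimplicity of $H_{R}$ and semisimplicity of ${}_{H}^{D}\mathcal{M}$ need their own justification (e.g.\ via ${}_{H}^{H_{R}}\mathcal{M}\cong{}_{H}^{H}\mathcal{YD}$ and Radford's theorem that the Drinfeld double $D(H)$ is semisimple exactly when $H$ is semisimple and cosemisimple), which is again supplied in \cite{LiuZhu2019On} rather than being immediate from the hypotheses.
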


\section{Adjoint-Stable Algebras for Factorizable Hopf Algebras}

Assume that $\left( H,R\right) $ is a finite dimensional quasi-triangular
Hopf algebra. Let $\left( H_{R}\right)^{\ast }$ denote the dual Hopf
algebra of $H_{R}$ in the category $_{H}\mathcal{M}$. Then $\left(
H_{R}\right) ^{\ast }=H^{\ast }$ as vector space, and as an object of $_{H}%
\mathcal{M}$ the left $H$-module structure on $\left( H_{R}\right) ^{\ast }$
is determined by $\left\langle h\rightharpoonup \!\!\!\!\rightharpoonup
f,h^{\prime }\right\rangle =\left\langle f,S\left( h\right) \cdot
_{ad}h^{\prime }\right\rangle $, for $h,h^{\prime }\in H,f\in H^{\ast }$.
The multiplication and comultiplication on $\left( H_{R}\right) ^{\ast }$
are defined by
\begin{eqnarray*}
f\ast _{R}g &=&\left( S\left( {R_{1}}^{2}{R_{2}}^{2}\right) \rightharpoonup
g\right) \ast \left( S\left( {R_{2}}^{1}\right) \rightharpoonup
f\leftharpoonup {R_{1}}^{1}\right) , \\
\Delta \left( f\right) &=&f_{\left( 2\right) }\otimes f_{\left( 1\right) },
\end{eqnarray*}%
for $f,g\in H^{\ast }$. One observes that the algebra $\left(H_{R}\right)^{\ast }$ here
is opposite to the usual convolution algebra of $H_{R}$.

Define a map
\begin{equation}
\Phi :\left( H_{R}\right) ^{\ast }\rightarrow H_{R},\ f\mapsto \left\langle
f,S\left( {R_{2}}^{2}{R_{1}}^{1}\right) \right\rangle {R_{2}}^{1}{R_{1}}^{2}.
\label{eq Phi}
\end{equation}

The following result is due to Lyubashenko and Majid~\cite%
{Lyubashenko1994Braided,Majid1995foundations}.

\begin{lemma}
\label{lemma H_R selfsual}The map $\Phi $ is a morphism of braided Hopf
algebras in $_{H}\mathcal{M}$. If $\left( H,R\right) $ is factorizable, then
$H_{R}$ is a self-dual braided Hopf algebra in $_{H}\mathcal{M}$.
\end{lemma}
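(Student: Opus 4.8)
The plan is to verify directly that $\Phi$ is a morphism of braided Hopf algebras in $_{H}\mathcal{M}$ and then to reduce its bijectivity to the factorizability hypothesis. It is convenient first to rewrite $\Phi$ in terms of the monodromy element $\mathcal{Q}=RR_{21}\in H\otimes H$. Writing $R=R^{1}\otimes R^{2}$ and expanding the two copies, a short computation identifies $\Phi (f)=(\mathrm{id}\otimes (f\circ S))(RR_{21})$. Thus $\Phi =\mathfrak{f}\circ S^{\ast }$, where $S^{\ast }$ is the transpose of the antipode and $\mathfrak{f}:H^{\ast }\rightarrow H$, $p\mapsto \langle p,{R_{1}}^{2}{R_{2}}^{1}\rangle {R_{1}}^{1}{R_{2}}^{2}$ is precisely the factorizability map of the Introduction. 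This rewriting is what makes the final assertion immediate, and it also clarifies which quasi-triangular identities will be needed in the verification.

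To show $\Phi$ is a morphism in $_{H}\mathcal{M}$, I would first check $H$-linearity. Using the $H$-action on $(H_{R})^{\ast }$ (the transpose of $S(h)\cdot _{ad}(-)$) and the left adjoint action on $H_{R}$, this reduces to pushing $h$ through the monodromy $\mathcal{Q}$, which is governed by the intertwining relation $R\Delta (h)=\Delta ^{\mathrm{op}}(h)R$ and its $R_{21}$-analogue. Next I would dispose of the coalgebra and unit/counit conditions: $\varepsilon \circ \Phi =\varepsilon $ and $\Phi (\varepsilon )=1$ are direct, and the coalgebra-morphism property is checked by expanding $\Delta _{R}(\Phi (f))=\Phi (f)_{(1)}S(R^{2})\otimes R^{1}\cdot _{ad}\Phi (f)_{(2)}$ and matching it, via the self-pairing, against $(\Phi \otimes \Phi )$ applied to the flipped coproduct $\Delta (f)=f_{(2)}\otimes f_{(1)}$ of $(H_{R})^{\ast }$.

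The substantive step is multiplicativity, $\Phi (f\ast _{R}g)=\Phi (f)\Phi (g)$. Here one must expand the twisted product $\ast _{R}$, which interleaves the ordinary convolution $\ast $ with the $\rightharpoonup ,\leftharpoonup $ actions and several copies of $R$, and then reorganize the resulting string of $R$-matrices into the product in $H$ of two monodromy images. The repackaging is driven by the hexagon identities $(\Delta \otimes \mathrm{id})(R)=R_{13}R_{23}$ and $(\mathrm{id}\otimes \Delta )(R)=R_{13}R_{12}$, together with the quantum Yang--Baxter equation for $\mathcal{Q}$; the guiding principle is that $\mathfrak{f}$ is the algebra map realizing the canonical quantum pairing on the braided dual. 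I expect this $R$-matrix bookkeeping to be the main obstacle. Once $\Phi$ is known to be a bialgebra morphism in $_{H}\mathcal{M}$, compatibility with the antipodes $S_{R}$ and $S_{(H_{R})^{\ast }}$ is automatic, so $\Phi$ is a morphism of braided Hopf algebras.

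Finally, suppose $(H,R)$ is factorizable. Since $H$ is finite dimensional its antipode $S$ is bijective, hence so is $S^{\ast }$; by the identification $\Phi =\mathfrak{f}\circ S^{\ast }$, the map $\Phi$ is bijective exactly when $\mathfrak{f}$ is, which is precisely the factorizability hypothesis. A bijective morphism of braided Hopf algebras is an isomorphism, so $H_{R}\cong (H_{R})^{\ast }$ in $_{H}\mathcal{M}$, exhibiting $H_{R}$ as a self-dual braided Hopf algebra and completing the proof.
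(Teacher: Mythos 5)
You should first note that the paper does not prove this lemma at all: it is stated as a known result and attributed to Lyubashenko and Majid, so there is no in-paper proof to compare against, and any complete argument you give is necessarily ``different'' from the paper's treatment. Within your proposal, the structural reduction is correct and is the right way to organize the proof: writing $\Phi(f)=(\mathrm{id}\otimes(f\circ S))(RR_{21})$ is a valid identification (relabel the two copies of $R$ in the paper's formula $\Phi(f)=\langle f,S({R_2}^2{R_1}^1)\rangle\, {R_2}^1{R_1}^2$ to get $\langle f,S({R_1}^2{R_2}^1)\rangle\, {R_1}^1{R_2}^2$), so indeed $\Phi=\mathfrak{f}\circ S^{\ast}$ with $\mathfrak{f}$ the factorizability map of the Introduction. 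Since $H$ is finite dimensional, $S$ and hence $S^{\ast}$ are bijective, so $\Phi$ is bijective precisely when $\mathfrak{f}$ is; combined with the first assertion this yields self-duality, and your observation that a bijective bialgebra morphism in ${}_{H}\mathcal{M}$ is automatically an isomorphism of braided Hopf algebras (antipode compatibility being automatic by uniqueness of convolution inverses, which holds in the braided setting) is also sound. This part of your argument is complete and correct.

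The weakness is that the first assertion --- that $\Phi$ is a morphism of braided Hopf algebras --- is where all the mathematical content lies, and your treatment of it remains a plan rather than a proof. The multiplicativity $\Phi(f\ast_{R}g)=\Phi(f)\Phi(g)$ against the twisted product $\ast_{R}$, and the compatibility of $\Delta_{R}$ with the flipped coproduct on $(H_{R})^{\ast}$, are each nontrivial $R$-matrix computations; you correctly name the needed ingredients (the intertwining relation $R\Delta(h)=\Delta^{\mathrm{op}}(h)R$, the two hexagon identities, the resulting identities for the monodromy $RR_{21}$), and this is in fact how the computation is carried out in Lyubashenko--Majid and in Majid's \emph{Foundations}, but phrases like ``I expect this $R$-matrix bookkeeping to be the main obstacle'' concede that the central verification has not been executed. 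As a referee I would classify this as an incomplete proof with a viable and standard strategy: nothing in the outline would fail, the easy checks ($H$-linearity, unit, counit, the bijectivity reduction) are done or routine, but the lemma's core claim is, in your write-up as in the paper itself, ultimately resting on the cited literature rather than on a displayed computation.
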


Now we take a nonzero module $W\in {}_{H}\mathcal{M}$, then $\Phi $ induced
a left $H_{R}$-comodule structure on $W$ via
\begin{equation}
\rho _{R}\left( w\right) =S\left( {R_{2}}^{2}{R_{1}}^{1}\right) \otimes {%
R_{2}}^{1}{R_{1}}^{2}w,\ w\in W.  \label{rho_R W}
\end{equation}%
If $\left( H,R\right) $ is factorizable, then every left $H_{R}$-comodule $W$
is of this form by Lemma~\ref{lemma H_R selfsual}, thus in this case we can
identify $_{H}\mathcal{M}={}^{H_{R}}\mathcal{M}$. We first compute the
R-adjoint-stable algebra of $W$ with the given $H_{R}$-comodule structure.

Let $W,M$ be two left $H$-modules. We define a left $H$-action and a left $%
H_{R}$-coaction on $W\otimes M$ via
\begin{equation*}
h\left( w\otimes m\right) =h_{\left( 1\right) }w\otimes h_{\left( 2\right)
}m,\text{ }\rho _{R}\left( w\otimes m\right) =w_{\left\langle
-1\right\rangle }\otimes w_{\left\langle 0\right\rangle }\otimes m,\ h\in
H,w\in W,m\in M,
\end{equation*}%
where $w_{\left\langle -1\right\rangle }\otimes w_{\left\langle
0\right\rangle }=\rho _{R}\left( w\right) $ as indicated in (\ref{rho_R W}).
We denote $W\otimes M$ with these $H$-action and $H_{R}$-coaction by $%
\overline{W\otimes M}$.

\begin{lemma}
\label{lemma HotWisoWotH}Let $W,M$ be two left $H$-modules. Then

\begin{enumerate}
\item $\overline{W\otimes M}\in {}_{H}^{H_{R}}\mathcal{M}$. In particular,
the left $H$-module $W=\overline{W\otimes k}$ is a natural object of $%
{}_{H}^{H_{R}}\mathcal{M}$.

\item The map
\begin{equation}
H\otimes W\rightarrow \overline{W\otimes H},\text{ }h\otimes w\mapsto
h_{\left( 1\right) }w\otimes h_{\left( 2\right) },  \label{eqLemma2)}
\end{equation}
is an isomorphism in ${}_{H}^{H_{R}}\mathcal{M}$, where $H\otimes W\in
{}_{H}^{H_{R}}\mathcal{M}$ is given by (\ref{stru HotimesW}).

\item The subcoalgebra $D_{H\otimes W}$ equals to
\begin{equation*}
D_{W}=\limfunc{span}\left\{ w_{\left\langle -1\right\rangle }\left\langle
w^{\ast },w_{\left\langle 0\right\rangle }\right\rangle \mid w\in W,w^{\ast
}\in W^{\ast }\right\} ,
\end{equation*}%
and the map $\overline{W\otimes W^{\ast }}\rightarrow D_{W}$, sending $%
w\otimes w^{\ast }$ to $w_{\left\langle -1\right\rangle }\left\langle
w^{\ast },w_{\left\langle 0\right\rangle }\right\rangle $, is an epimorphism
in ${}_{H}^{H_{R}}\mathcal{M}$, where $W^{\ast }$ is a left $H$-module via $%
\left\langle hw^{\ast },w\right\rangle =\left\langle w^{\ast },S^{-1}\left(
h\right) w\right\rangle $.
\end{enumerate}
\end{lemma}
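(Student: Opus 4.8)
The plan is to establish the three assertions in order, taking (1) as the structural core and deducing (2) and (3) from it. Recall that a left $H$-module $V$ equipped with a left $H_{R}$-coaction $\rho_{R}$ lies in ${}_{H}^{H_{R}}\mathcal{M}$ precisely when $\rho_{R}$ is an $H$-module map into $H_{R}\otimes V$, the latter carrying the action $h\left(a\otimes v\right)=h_{\left(1\right)}\cdot_{ad}a\otimes h_{\left(2\right)}v$; explicitly,
\begin{equation*}
\rho_{R}\left(hv\right)=h_{\left(1\right)}\cdot_{ad}v_{\left\langle -1\right\rangle}\otimes h_{\left(2\right)}v_{\left\langle 0\right\rangle}.
\end{equation*}
The comodule axioms for (\ref{rho_R W}) are already guaranteed by Lemma~\ref{lemma H_R selfsual}, since $\rho_{R}$ is obtained by transporting the canonical $\left(H_{R}\right)^{\ast}$-comodule structure of the $H$-module $W$ along the coalgebra morphism $\Phi$. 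Thus the only real content of (1) is the displayed compatibility, which I would verify directly by substituting (\ref{rho_R W}) and pushing the quasi-triangularity relation $R\Delta\left(h\right)=\Delta^{\mathrm{cop}}\left(h\right)R$ through the two copies of $R$ together with $\left(\Delta\otimes\mathrm{id}\right)\left(R\right)=R_{13}R_{23}$ and $\left(\mathrm{id}\otimes\Delta\right)\left(R\right)=R_{13}R_{12}$. This R-matrix computation is the main obstacle of the lemma; once it is in hand, the diagonal action and the coaction $\rho_{R}\left(w\otimes m\right)=w_{\left\langle -1\right\rangle}\otimes w_{\left\langle 0\right\rangle}\otimes m$ on $\overline{W\otimes M}$ satisfy the same compatibility because the spectator leg $m$ only contributes an $h_{\left(3\right)}$ absorbed by coassociativity, and the case $M=k$ yields the ``in particular'' clause.

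For (2), I would write $\theta$ for the map $h\otimes w\mapsto h_{\left(1\right)}w\otimes h_{\left(2\right)}$ and propose the inverse $w\otimes h\mapsto h_{\left(2\right)}\otimes S^{-1}\left(h_{\left(1\right)}\right)w$; both composites reduce to the identity after one use of coassociativity and the antipode identities $S^{-1}\left(h_{\left(2\right)}\right)h_{\left(1\right)}=\varepsilon\left(h\right)1$ and $h_{\left(2\right)}S^{-1}\left(h_{\left(1\right)}\right)=\varepsilon\left(h\right)1$, so $\theta$ is bijective. $H$-linearity is immediate from $\left(h^{\prime}h\right)_{\left(1\right)}\otimes\left(h^{\prime}h\right)_{\left(2\right)}=h^{\prime}_{\left(1\right)}h_{\left(1\right)}\otimes h^{\prime}_{\left(2\right)}h_{\left(2\right)}$. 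For $H_{R}$-colinearity, evaluating the coaction of $\overline{W\otimes H}$ at $\theta\left(h\otimes w\right)=h_{\left(1\right)}w\otimes h_{\left(2\right)}$ forces the computation of $\rho_{R}\left(h_{\left(1\right)}w\right)$, which is exactly where the compatibility of (1) is used; comparing the result with the coaction (\ref{stru HotimesW}) on $H\otimes W$ gives equality after reindexing. Hence $\theta$ is an isomorphism in ${}_{H}^{H_{R}}\mathcal{M}$.

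For (3), the equality $D_{H\otimes W}=D_{W}$ needs no new computation: the associated subcoalgebra is the coefficient coalgebra of the coaction and is therefore invariant under $H_{R}$-comodule isomorphism, so (2) gives $D_{H\otimes W}=D_{\overline{W\otimes H}}$; since the coaction on $\overline{W\otimes H}$ leaves the $H$-leg inert, pairing against $\left(W\otimes H\right)^{\ast}=W^{\ast}\otimes H^{\ast}$ returns precisely the spanning set of $D_{W}$. For the surjection $\overline{W\otimes W^{\ast}}\to D_{W}$, $w\otimes w^{\ast}\mapsto w_{\left\langle -1\right\rangle}\left\langle w^{\ast},w_{\left\langle 0\right\rangle}\right\rangle$, surjectivity is the definition of $D_{W}$, and $H_{R}$-colinearity is formal, being the coassociativity of $\rho_{R}$ on $W$ together with the fact that $D_{W}\subseteq H_{R}$ coacts on itself through $\Delta_{R}$. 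The one point that again invokes (1) is $H$-linearity: applying the compatibility to $\rho_{R}\left(h_{\left(1\right)}w\right)$ and using the module structure $\left\langle hw^{\ast},w\right\rangle=\left\langle w^{\ast},S^{-1}\left(h\right)w\right\rangle$ on $W^{\ast}$ produces the factor $S^{-1}\left(h_{\left(3\right)}\right)h_{\left(2\right)}$, which collapses by the antipode identity and leaves exactly $h\cdot_{ad}$ acting on the image in $D_{W}$. Thus the entire lemma rests on the single quasi-triangular verification in (1).
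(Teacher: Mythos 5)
Your proposal is correct and follows essentially the same route as the paper: part 1) by the direct quasi-triangularity computation (in fact only the commutation of the monodromy $R_{21}R$ with $\Delta\left(h\right)$, obtained by applying $R\Delta\left(h\right)=\Delta^{\mathrm{cop}}\left(h\right)R$ to both copies of $R$, is needed---the hexagon axioms you invoke are superfluous here), part 2) with exactly the inverse $w\otimes h\mapsto h_{\left(2\right)}\otimes S^{-1}\left(h_{\left(1\right)}\right)w$ that the paper records, and part 3) deduced from 2) together with the routine checks the paper dismisses as ``an easy verification.'' The details you supply for 2) and 3) (invariance of the coefficient coalgebra under comodule isomorphism, colinearity from coassociativity, and the collapse of the factor $S^{-1}\left(h_{\left(3\right)}\right)h_{\left(2\right)}$ via the antipode identity) are precisely the verifications the paper intends.
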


\begin{proof}
\begin{enumerate}
\item To see this, let $h\in H$, $w\in W$, $m\in M$, then%
\begin{eqnarray*}
\rho _{R}\left( h\left( w\otimes m\right) \right) &=&\rho _{R}\left(
h_{\left( 1\right) }w\otimes h_{\left( 2\right) }m\right) \\
&=&\left( h_{\left( 1\right) }w\right) _{\left\langle -1\right\rangle
}\otimes \left( h_{\left( 1\right) }w\right) _{\left\langle 0\right\rangle
}\otimes h_{\left( 2\right) }m \\
&=&S\left( {R_{2}}^{2}{R_{1}}^{1}\right) \otimes {R_{2}}^{1}{R_{1}}%
^{2}h_{\left( 1\right) }w\otimes h_{\left( 2\right) }m \\
&=&h_{\left( 1\right) }S\left( {R_{2}}^{2}{R_{1}}^{1}h_{\left( 2\right)
}\right) \otimes {R_{2}}^{1}{R_{1}}^{2}h_{\left( 3\right) }w\otimes
h_{\left( 4\right) }m \\
&=&h_{\left( 1\right) }\cdot _{ad}S\left( {R_{2}}^{2}{R_{1}}^{1}\right)
\otimes h_{\left( 2\right) }{R_{2}}^{1}{R_{1}}^{2}w\otimes h_{\left(
3\right) }m \\
&=&h_{\left( 1\right) }\cdot _{ad}w_{\left\langle -1\right\rangle }\otimes
h_{\left( 2\right) }\left( w_{\left\langle 0\right\rangle }\otimes m\right) .
\end{eqnarray*}

\item The inverse of (\ref{eqLemma2)}) is given by
\begin{equation*}
w\otimes h\mapsto S^{-1}\left( h_{\left( 1\right) }\right) w\otimes
h_{\left( 2\right) }.
\end{equation*}%
Using the fact that $W\in {}_{H}^{H_{R}}\mathcal{M}$, it is verified easily
that (\ref{eqLemma2)}) preserves the structures.

\item It follows directly from 2) and an easy verification.
\end{enumerate}
\end{proof}

\begin{remark}
  Observe that the statements of Lemma~\ref{lemma HotWisoWotH} is valid for $H_R$-comodule $W$ with its comodule structure
   arises from a left $H$-module. For a general 
  left $H_{R}$-comodule $W$, let $C$ be the subcoalgebra of $H_{R}$
  associated with $W$, then $D_{H\otimes W}=H\cdot _{ad}C$ is not necessarily
  equal to $C$.
\end{remark}

Let $W$ be a left $H$-module. Then $\left( \func{End}^{H_{R}}\left( W\right)
\right) ^{op}$ is a left $H$-module algebra via
\begin{equation*}
\left( h\cdot \alpha \right) \left( w\right) =h_{\left( 2\right) }\alpha
\left( S^{-1}\left( h_{\left( 1\right) }\right) w\right) ,
\end{equation*}%
for $h\in H$, $\alpha \in \func{End}^{H_{R}}\left( W\right) $, $w\in W$.

\begin{proposition}
\label{prop stru of N_W}Let $W$ be a finite dimensional left $H$-module.
Then the $R$-adjoint stable algebra $N_{W}$ is anti-isomorphic to $\left(
\func{End}^{H_{R}}\left( W\right) \right) ^{op}\#H$, where $D=D_{W}$.
\end{proposition}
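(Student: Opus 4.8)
The plan is to convert the cotensor-product description of $N_{W}$ into an ordinary tensor-product one by importing the isomorphism of Lemma~\ref{lemma HotWisoWotH}, and then to match the transported multiplication with the smash product. Writing $D=D_{W}$, Lemma~\ref{lemma HotWisoWotH}(3) gives $D_{H\otimes W}=D_{W}$, and Lemma~\ref{lemma HotWisoWotH}(2) gives an isomorphism $\theta\colon H\otimes W\to\overline{W\otimes H}$ in ${}_{H}^{H_{R}}\mathcal{M}$ with inverse $w\otimes h\mapsto S^{-1}(h_{(1)})w\otimes h_{(2)}$. Applying $\mathrm{id}_{W^{\ast}}\,\square_{D}\,\theta$ I obtain a linear isomorphism
\[
N_{W}=W^{\ast}\square_{D}(H\otimes W)\;\xrightarrow{\ \sim\ }\;W^{\ast}\square_{D}\overline{W\otimes H}.
\]
Since the $D$-coaction on $\overline{W\otimes H}$ lives entirely on the $W$-tensorand (equation~(\ref{rho_R W})), the right-hand cotensor product equals $(W^{\ast}\square_{D}W)\otimes H$ as a vector space, the $H$-factor being a free rider.

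Next I would identify the factor $W^{\ast}\square_{D}W$. Because $D=D_{W}$ is precisely the coefficient coalgebra of the finite dimensional $H_{R}$-comodule $W$, a map is $H_{R}$-colinear if and only if it is $D$-colinear, so $\func{End}^{H_{R}}(W)=\func{End}^{D}(W)$; and the standard matrix-coefficient argument shows that
\[
\Xi\colon\func{End}^{D}(W)\xrightarrow{\ \sim\ }W^{\ast}\square_{D}W,\qquad\alpha\mapsto\textstyle\sum_{i}e_{i}^{\ast}\otimes\alpha(e_{i}),
\]
is a linear isomorphism: one checks from the defining relations that $\sum_{i}e_{i}^{\ast}\otimes\alpha(e_{i})$ satisfies the cotensor relation exactly when $\alpha$ is $D$-colinear, and a transpose/dimension count gives bijectivity. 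Combining the two steps produces a linear isomorphism
\[
f\colon N_{W}\to\func{End}^{H_{R}}(W)\otimes H,\qquad\sum_{j}w_{j}^{\ast}\otimes h_{j}\otimes w_{j}\ \longmapsto\ \sum_{j}\bigl[\langle w_{j}^{\ast},-\rangle\,(h_{j})_{(1)}w_{j}\bigr]\#(h_{j})_{(2)},
\]
where the bracket denotes the endomorphism $w\mapsto\langle w_{j}^{\ast},w\rangle(h_{j})_{(1)}w_{j}$. The cotensor relation defining $N_{W}$ is exactly what forces the endomorphism components into $\func{End}^{H_{R}}(W)$, so $f$ does take values in $(\func{End}^{H_{R}}(W))^{op}\#H$, and it is bijective by the first two steps.

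The decisive step is to verify that $f$ is an algebra anti-homomorphism, i.e. $f(x\circ y)=f(y)f(x)$ for $x=\sum_{j}w_{j}^{\ast}\otimes h_{j}\otimes w_{j}$ and $y=\sum_{l}v_{l}^{\ast}\otimes g_{l}\otimes v_{l}$. Applying $f$ to $x\circ y=\sum_{l,j}v_{l}^{\ast}\otimes g_{l}h_{j}\otimes\langle w_{j}^{\ast},v_{l}\rangle w_{j}$ and expanding $\Delta(g_{l}h_{j})$ yields one Sweedler expression. On the other side I expand $f(y)f(x)$ using the smash-product rule $(\alpha\#a)(\beta\#b)=\alpha\cdot_{\mathrm{op}}(a_{(1)}\cdot\beta)\#a_{(2)}b$, where $\cdot_{\mathrm{op}}$ is the product of $(\func{End}^{H_{R}}(W))^{op}$ (reverse composition) and $(a\cdot\beta)(w)=a_{(2)}\beta(S^{-1}(a_{(1)})w)$ is the module-algebra action recalled just before the proposition. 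The two expressions coincide after the antipode identity $\sum S^{-1}(a_{(2)})a_{(1)}=\varepsilon(a)1$, which collapses the twist introduced by $\theta^{-1}$ against the twist built into the $H$-action on $\func{End}^{H_{R}}(W)$.

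I expect the bookkeeping in this last step to be the only genuine obstacle: one must keep the coproduct legs of each $h_{j}$ and $g_{l}$ correctly distributed among the endomorphism slot, the $H$-action, and the $H$-factor, and confirm that the order $g_{l}h_{j}$ (rather than $h_{j}g_{l}$) appearing in $\circ$ is exactly what produces an anti-homomorphism with the opposite algebra $(\func{End}^{H_{R}}(W))^{op}$ rather than a plain homomorphism. Conceptually this is also consistent with Lemma~\ref{lemma LiuZhu}, under which $N_{W}$ is the endomorphism algebra of $H\otimes W$ in ${}_{H}^{D}\mathcal{M}$; the smash product $(\func{End}^{H_{R}}(W))^{op}\#H$ is simply the explicit computation of those endomorphisms through the identification $H\otimes W\cong\overline{W\otimes H}$.
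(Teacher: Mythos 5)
Your proposal is correct and follows essentially the same route as the paper: the composite map you call $f$ is exactly the paper's $\theta\colon N_{W}\to\left(W^{\ast}\square_{D}W\right)\#H$ built from Lemma~\ref{lemma HotWisoWotH}(2), with $W^{\ast}\square_{D}W$ identified with $\left(\func{End}^{H_{R}}\left(W\right)\right)^{op}$, followed by the check that $\theta\left(xy\right)=\theta\left(y\right)\theta\left(x\right)$. The only difference is that the paper writes out the Sweedler computation in full, whereas you sketch it; your sketch identifies the correct mechanism, since the paper's verification indeed collapses via $\left\langle w_{i}^{\ast},S^{-1}\left(g_{j\left(2\right)}\right)g_{j\left(1\right)}v_{j}\right\rangle=\varepsilon\left(g_{j\left(1\right)}\right)\left\langle w_{i}^{\ast},v_{j}\right\rangle$, which is precisely your identity $S^{-1}\left(a_{\left(2\right)}\right)a_{\left(1\right)}=\varepsilon\left(a\right)1$ meeting the twist in the module-algebra action.
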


\begin{proof}
We write $\left( \func{End}^{H_{R}}\left( W\right) \right) ^{op}$ as $%
W^{\ast }\square _{D}W$ with multiplication
\begin{equation*}
\left( \sum_{i}w_{i}^{\ast }\otimes w_{i}\right) \left( \sum_{j}v_{j}^{\ast
}\otimes v_{j}\right) =\sum_{i}\sum_{j}\left\langle v_{j}^{\ast
},w_{i}\right\rangle w_{i}^{\ast }\otimes v_{j},
\end{equation*}%
for $\sum_{i}w_{i}^{\ast }\otimes w_{i},\ \sum_{j}v_{j}^{\ast }\otimes
v_{j}\in W^{\ast }\square _{D}W$.

By 2) of Lemma~\ref{lemma HotWisoWotH} and the definition of $N_W$, we have
a linear isomorphism
\begin{eqnarray*}
\theta&:& N_W=W^*\square_D\left(H\otimes W\right)\to W^*\square_D\left(%
\overline{W\otimes H}\right)\to \left( W^{\ast }\square _{D}W\right) \# H, \\
&& \sum_{i}w_{i}^{\ast }\otimes h_{i}\otimes w_{i}\mapsto
\sum_{i}\left(w_{i}^{\ast }\otimes h_{i\left( 1\right) }w_{i}\right)\#
h_{i\left( 2\right)}.
\end{eqnarray*}
We will show that $\theta $ is an anti-algebra isomorphism. Let $%
x=\sum_{i}w_{i}^{\ast }\otimes h_{i}\otimes w_{i}$,\newline
$y=\sum_{j}v_{j}^{\ast }\otimes g_{j}\otimes v_{j}\in N_{W}$, then
\begin{eqnarray*}
\theta \left( y\right) \theta \left( x\right) &=&\left(
\sum_{j}\left(v_{j}^{\ast }\otimes g_{j\left( 1\right) }v_{j}\right)\#
g_{j\left( 2\right) }\right) \left( \sum_{i}\left(w_{i}^{\ast }\otimes
h_{i\left( 1\right) }w_{i}\right)\# h_{i\left( 2\right) }\right) \\
&=&\sum_{j}\sum_{i}\left( v_{j}^{\ast }\otimes g_{j\left( 1\right)
}v_{j}\right) \left( g_{j\left( 2\right) }\cdot \left( w_{i}^{\ast }\otimes
h_{i\left( 1\right) }w_{i}\right) \right) \# g_{j\left( 3\right) }h_{i\left(
2\right) } \\
&=&\sum_{j}\sum_{i}\left( v_{j}^{\ast }\otimes \left( g_{j\left( 2\right)
}\cdot \left( w_{i}^{\ast }\otimes h_{i\left( 1\right) }w_{i}\right) \right)
\left( g_{j\left( 1\right) }v_{j}\right) \right) \# g_{j\left( 3\right)
}h_{i\left( 2\right) } \\
&=&\sum_{j}\sum_{i}\left( v_{j}^{\ast }\otimes \left\langle w_{i}^{\ast
},S^{-1}\left( g_{j\left( 2\right) }\right) \left( g_{j\left( 1\right)
}v_{j}\right) \right\rangle g_{j\left( 3\right) }h_{i\left( 1\right)
}w_{i}\right) \# g_{j\left( 3\right) }h_{i\left( 2\right) } \\
&=&\sum_{j}\sum_{i}\left(\left\langle w_{i}^{\ast },v_{j}\right\rangle
v_{j}^{\ast }\otimes g_{j\left( 1\right) }h_{i\left( 1\right)
}w_{i}\right)\# g_{j\left( 2\right) }h_{i\left( 2\right) } \\
&=&\theta \left( \sum_{i} \sum_{j}\left\langle w_{i}^{\ast
},v_{j}\right\rangle v_{j}^{\ast }\otimes g_{j}h_{i}\otimes w_{i}\right) \\
&=&\theta \left( xy\right) ,
\end{eqnarray*}%
as desired.
\end{proof}

From now on, we assume that $\left( H,R\right) $ is a semisimple
factorizable Hopf algebra over an algebraically closed field $k$. Then as a $%
k$-algebra, $H_{R}=H$ is semisimple. So by Lemma~\ref{lemma H_R selfsual}, $%
\left( H_{R}\right) ^{\ast }$ is semisimple, and then $H_{R}$ is
cosemisimple. Thus, the categories ${}_{H}^{H}\mathcal{YD\cong {}}%
_{H}^{H_{R}}\mathcal{M}$ are semisimple. Let $\limfunc{Irr}\left( H\right) $
denote a set of representatives of isomorphism classes of irreducible left $%
H $-modules.

As an immediate consequence of Proposition~\ref{prop stru of N_W}, we obtain:

\begin{corollary}
\label{cor Nw iso H}Let $W\in \limfunc{Irr}\left( H\right) $, then the $R$%
-adjoint stable algebra $N_{W}$ is isomorphic to $H^{op}$.
\end{corollary}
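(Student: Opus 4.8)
The plan is to feed $W$ into Proposition~\ref{prop stru of N_W} and then collapse the factor $\func{End}^{H_{R}}\left(W\right)$ down to the ground field. Concretely, Proposition~\ref{prop stru of N_W} gives that $N_{W}$ is anti-isomorphic to $\left(\func{End}^{H_{R}}\left(W\right)\right)^{op}\#H$ with $D=D_{W}$, so it suffices to prove $\func{End}^{H_{R}}\left(W\right)=k$. I would obtain this in two moves: first identify the braided-comodule endomorphisms with ordinary module endomorphisms, and then invoke Schur's lemma.

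First I would show $\func{End}^{H_{R}}\left(W\right)=\func{End}_{H}\left(W\right)$. The $H_{R}$-comodule structure on $W$ is the one induced by $\Phi$, namely $\rho_{R}\left(w\right)=S\left({R_{2}}^{2}{R_{1}}^{1}\right)\otimes {R_{2}}^{1}{R_{1}}^{2}w$ from (\ref{rho_R W}). A linear endomorphism $\alpha$ of $W$ is an $H_{R}$-comodule map exactly when $S\left({R_{2}}^{2}{R_{1}}^{1}\right)\otimes \alpha\left({R_{2}}^{1}{R_{1}}^{2}w\right)=S\left({R_{2}}^{2}{R_{1}}^{1}\right)\otimes {R_{2}}^{1}{R_{1}}^{2}\alpha\left(w\right)$ for all $w$. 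Pairing the first tensor leg against an arbitrary $\eta\in H^{\ast}$ and using linearity of $\alpha$, this becomes the family of identities $\alpha\left(hw\right)=h\,\alpha\left(w\right)$, where $h=\left\langle \eta\circ S,{R_{2}}^{2}{R_{1}}^{1}\right\rangle {R_{2}}^{1}{R_{1}}^{2}$; after renaming the two copies of $R$ this is precisely the image of $\eta\circ S$ under the factorizability map $p\mapsto \left\langle p,{R_{1}}^{2}{R_{2}}^{1}\right\rangle {R_{1}}^{1}{R_{2}}^{2}$. Since $S$ is bijective, $\eta\circ S$ ranges over all of $H^{\ast}$, and since $\left(H,R\right)$ is factorizable this image ranges over all of $H$. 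Hence $\alpha$ is an $H_{R}$-comodule map if and only if it is $H$-linear; this is exactly the content on morphisms of the identification $_{H}\mathcal{M}={}^{H_{R}}\mathcal{M}$ recorded earlier.

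This identification of endomorphism algebras is the one place where factorizability is genuinely used, and I expect it to be the only real obstacle; everything else is formal. Once it is in hand, Schur's lemma applies: $k$ is algebraically closed and $W$ is irreducible, so $\func{End}_{H}\left(W\right)=k$, whence $\func{End}^{H_{R}}\left(W\right)=k$.

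Finally I would substitute back. With $\func{End}^{H_{R}}\left(W\right)=k$ carrying the trivial $H$-action, the smash product degenerates: $\left(\func{End}^{H_{R}}\left(W\right)\right)^{op}\#H=k\#H=H$. Proposition~\ref{prop stru of N_W} then says that $N_{W}$ is anti-isomorphic to $H$, which is the same as an isomorphism $N_{W}\cong H^{op}$, as claimed.
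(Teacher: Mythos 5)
Your proposal is correct and takes essentially the same approach as the paper: both feed $W$ into Proposition~\ref{prop stru of N_W} and use factorizability together with Schur's lemma over the algebraically closed field to get $\func{End}^{H_{R}}\left( W\right) \cong k$, so that $N_{W}\cong \left( k\#H\right) ^{op}\cong H^{op}$. Your explicit check that $H_{R}$-colinearity of an endomorphism is equivalent to $H$-linearity is just an unpacking of the paper's one-line observation that $W$ is simple in $^{H_{R}}\mathcal{M}$ because the map $\Phi$ of (\ref{eq Phi}) is bijective.
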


\begin{proof}
$W$ is simple in $^{H_{R}}\mathcal{M}$, since the map $\Phi :H^{\ast
}\rightarrow H$ given by (\ref{eq Phi}) is bijective. It follows that $\func{%
End}^{H_{R}}\left( W\right) \cong k$. So by Proposition~\ref{prop stru of
N_W},
\begin{equation*}
N_{W}\cong \left( k\#H\right) ^{op}\cong H^{op}.
\end{equation*}
\end{proof}

\begin{remark}
\label{Remark iso bt H and Nw}Let $\left\{ w_{i},w_{i}^{\ast }\mid
i=1,\ldots ,n\right\} $ be a dual basis for $W$. Then the inverse of $\theta$
is the map $\psi :H^{op}\rightarrow N_{W}$ given by
\begin{equation*}
\psi \left( h\right) =\sum_{i=1}^{n}w_{i}^{\ast }\otimes h_{\left( 2\right)
}\otimes S^{-1}\left( h_{\left( 1\right) }\right) w_{i}.
\end{equation*}
\end{remark}

\begin{lemma}
\label{Lemma Hstsblesubcoalgebra}Every subcoalgebra of $H_{R}$ is $H$-stable.
\end{lemma}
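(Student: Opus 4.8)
The plan is to reduce the statement to simple subcoalgebras and then to identify each simple subcoalgebra as the coefficient coalgebra $D_{W}$ of a comodule coming from an $H$-module. First I would use that $H_{R}$ is cosemisimple, as established just above from the semisimplicity of $\left( H_{R}\right) ^{\ast }$ via Lemma~\ref{lemma H_R selfsual}: a cosemisimple coalgebra is the direct sum of its simple subcoalgebras, and any subcoalgebra $C$ of $H_{R}$ is itself the direct sum of the simple subcoalgebras of $H_{R}$ that it contains. Since the adjoint action $\cdot _{ad}$ is $k$-linear, $C$ is $H$-stable as soon as every simple subcoalgebra of $H_{R}$ is $H$-stable. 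Thus it suffices to treat a single simple subcoalgebra.

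Next, let $C$ be a simple subcoalgebra of $H_{R}$. Being cosemisimple, $C$ is the coefficient coalgebra of a unique simple left $H_{R}$-comodule $W$, so that $C=\limfunc{span}\left\{ w_{\left\langle -1\right\rangle }\left\langle w^{\ast },w_{\left\langle 0\right\rangle }\right\rangle \mid w\in W,w^{\ast }\in W^{\ast }\right\} $. Here I would invoke factorizability: by Lemma~\ref{lemma H_R selfsual} the map $\Phi $ is bijective, so every left $H_{R}$-comodule arises from a left $H$-module via (\ref{rho_R W}), giving the identification $_{H}\mathcal{M}={}^{H_{R}}\mathcal{M}$. Under this identification $W$ is simultaneously a simple left $H$-module whose $H_{R}$-coaction is given by (\ref{rho_R W}), and consequently $C$ coincides with the subcoalgebra $D_{W}$ appearing in part 3) of Lemma~\ref{lemma HotWisoWotH}.

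Finally, I would apply part 3) of Lemma~\ref{lemma HotWisoWotH}, which yields $D_{W}=D_{H\otimes W}$, the subcoalgebra of $H_{R}$ associated with $H\otimes W\in {}_{H}^{H_{R}}\mathcal{M}={}_{H}^{H}\mathcal{YD}$. Every subcoalgebra associated in this way to a Yetter-Drinfeld module is $H$-stable, as recalled for $D_{V}$ in the Preliminaries; hence $C=D_{W}$ is $H$-stable, which completes the reduction and the proof.

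The delicate point, and the reason factorizability is indispensable, is precisely the identification $C=D_{W}$ in the second step. The Remark following Lemma~\ref{lemma HotWisoWotH} warns that for a general simple $H_{R}$-comodule the coefficient coalgebra need not be $H$-stable, since in general $D_{H\otimes W}=H\cdot _{ad}C$ may strictly enlarge $C$. What rescues the argument is that factorizability forces every comodule to originate from an $H$-module, and for such comodules the coefficient coalgebra already absorbs the adjoint action, i.e. $H\cdot _{ad}D_{W}=D_{W}$. I expect the step requiring the most care to be checking that the abstractly defined simple subcoalgebra genuinely matches the concrete $D_{W}$ built from the module structure via (\ref{rho_R W}), rather than merely being the subcoalgebra it $\cdot _{ad}$-generates.
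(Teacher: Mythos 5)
Your proof is correct, but it takes a genuinely different route from the paper's. The paper also begins with the self-duality $H_{R}\cong \left( H_{R}\right) ^{\ast }$ from Lemma~\ref{lemma H_R selfsual}, but then finishes in one line: it transports the problem to $\left( H_{R}\right) ^{\ast }$ and observes that the $H$-action there is given by two-sided hit actions, $h\rightharpoonup \!\!\!\!\rightharpoonup f=S^{2}\left( h_{\left( 2\right) }\right) \rightharpoonup f\leftharpoonup S\left( h_{\left( 1\right) }\right) $, so $\Delta \left( C\right) \subseteq C\otimes C$ immediately forces $C$ to be stable --- no reduction to simple subcoalgebras, no comodule theory, and in fact no use of semisimplicity or cosemisimplicity at all, so the paper's argument works for any factorizable $\left( H,R\right) $. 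Your argument instead uses the cosemisimplicity of $H_{R}$ (available under the section's standing hypotheses) to reduce to a simple subcoalgebra $C$, identifies $C$ with the coefficient coalgebra $D_{W}$ of its simple comodule $W$, uses factorizability via the identification $_{H}\mathcal{M}={}^{H_{R}}\mathcal{M}$ to realize the coaction of $W$ in the form (\ref{rho_R W}), and then invokes part 3) of Lemma~\ref{lemma HotWisoWotH} to get $C=D_{W}=D_{H\otimes W}$, which is $H$-stable as the subcoalgebra associated with the Yetter--Drinfeld module $H\otimes W$. All of these steps check out: subcoalgebras of a cosemisimple coalgebra are sums of simple ones, a simple subcoalgebra is the coefficient coalgebra of its unique simple comodule, and $W$ is finite dimensional so the lemma applies; your closing diagnosis of why the Remark after Lemma~\ref{lemma HotWisoWotH} is not an obstruction here ($H\cdot _{ad}D_{W}=D_{W}$ for comodules arising from modules) is exactly right. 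What your approach buys is a structural explanation of stability consonant with Proposition~\ref{Proposition D_W is simple}; what it costs is the extra hypotheses and machinery --- the paper's dualization argument is both shorter and strictly more general.
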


\begin{proof}
By Lemma~\ref{lemma H_R selfsual}, $H_{R}\cong \left( H_{R}\right) ^{\ast }$
as braided Hopf algebras. So it suffices to show that every subcoalgebra of $%
\left( H_{R}\right) ^{\ast }$ is $H$-stable. Let $C$ be a subcoalgebra of $%
\left( H_{R}\right) ^{\ast }$. For any $h\in H$, $f\in C$,
\begin{equation*}
h\rightharpoonup \!\!\!\! \rightharpoonup f=S^{2}\left( h_{\left( 2\right) }\right)
\rightharpoonup f\leftharpoonup S\left( h_{\left( 1\right) }\right) \in C,
\end{equation*}%
since $\Delta \left( C\right) \subseteq C\otimes C$. Hence, $C$ is $H$%
-stable.
\end{proof}

\begin{proposition}
\label{Proposition D_W is simple} Let $T$ be the set of simple subcoalgebras
of $H_{R}$, then the map $\func{Irr}\left( H\right) \rightarrow T,\ W\mapsto
D_{W}$ is a bijection.

Moreover, $H=\tbigoplus_{W\in \func{Irr}\left( H\right) }D_{W}$ is a direct
sum of irreducible Yetter-Drinfeld modules.
\end{proposition}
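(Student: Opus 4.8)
Before seeing the authors' proof, here is the approach I would take.

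The plan is to reduce the whole statement to the classical structure theory of cosemisimple coalgebras, using factorizability as the bridge between left $H$-modules and left $H_R$-comodules. Since $\left(H,R\right)$ is factorizable, Lemma~\ref{lemma H_R selfsual} tells us that $\Phi$ is bijective, so every left $H_R$-comodule arises from a left $H$-module through the coaction~(\ref{rho_R W}); this is exactly the identification ${}_{H}\mathcal{M}={}^{H_R}\mathcal{M}$ noted in the text. Under this identification the isomorphism classes of simple left $H_R$-comodules correspond bijectively to $\func{Irr}\left(H\right)$, and for $W\in\func{Irr}\left(H\right)$ the subcoalgebra $D_W$ of Lemma~\ref{lemma HotWisoWotH}(3) is precisely the coefficient (matrix-element) coalgebra of $W$ viewed as a simple $H_R$-comodule. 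Establishing this dictionary cleanly is the first thing I would do.

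With the dictionary in place, the core is the standard bijection, valid for any cosemisimple coalgebra $D$, between isomorphism classes of simple left $D$-comodules and simple subcoalgebras of $D$, sending a simple comodule to its coefficient coalgebra. Here $D=H_R=H$ is cosemisimple, as established just before Corollary~\ref{cor Nw iso H}. I would first confirm that $D_W$ lands in $T$: over the algebraically closed field $k$ the coefficient coalgebra of a simple comodule over a cosemisimple coalgebra is a simple subcoalgebra (of dimension $\left(\dim W\right)^2$). Injectivity of $W\mapsto D_W$ is then immediate, since a simple subcoalgebra admits, up to isomorphism, a unique simple comodule, so $D_W=D_{W'}$ forces $W\cong W'$. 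For surjectivity, given a simple subcoalgebra $C\in T$ I would take its simple comodule $W$ (whose coefficient coalgebra is all of $C$) and transport it back through ${}^{H_R}\mathcal{M}={}_{H}\mathcal{M}$ to an element of $\func{Irr}\left(H\right)$ with $D_W=C$. This delivers the asserted bijection.

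For the \emph{Moreover} claim I would use that a finite dimensional cosemisimple coalgebra is the direct sum of its distinct simple subcoalgebras (dually, $\left(H_R\right)^{\ast}$ is semisimple and splits as a product of matrix blocks). Thus $H=H_R=\bigoplus_{C\in T}C=\bigoplus_{W\in\func{Irr}\left(H\right)}D_W$ as coalgebras. By Lemma~\ref{Lemma Hstsblesubcoalgebra} every subcoalgebra of $H_R$ is $H$-stable, so each $D_W$ is an $H$-stable subcoalgebra, equivalently a Yetter-Drinfeld submodule of $\left(H,\cdot_{ad},\Delta\right)$; being simple as a coalgebra it contains no proper nonzero subcoalgebra, hence is a minimal $H$-stable subcoalgebra and therefore an irreducible Yetter-Drinfeld submodule of $H$, as recorded in the preliminaries. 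Since the summands are $H$-submodules whose sum is direct, the displayed decomposition is a direct sum of irreducible Yetter-Drinfeld modules.

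The step I expect to be the main obstacle is precisely the bookkeeping in the first paragraph: verifying that $D_W$, defined intrinsically in Lemma~\ref{lemma HotWisoWotH}(3) via the coaction~(\ref{rho_R W}), really coincides with the coefficient coalgebra appearing in the abstract coalgebra bijection, and that the module and comodule isomorphism classes match up exactly under $\Phi$. Once this translation is secured, both the bijection and the decomposition are routine consequences of cosemisimplicity combined with Lemma~\ref{Lemma Hstsblesubcoalgebra}.
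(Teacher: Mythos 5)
Your proposal is correct and takes essentially the same route as the paper's proof: use factorizability to identify ${}_{H}\mathcal{M}={}^{H_{R}}\mathcal{M}$, apply the standard correspondence between simple comodules and simple subcoalgebras of the cosemisimple coalgebra $H_{R}$ to get the bijection and the decomposition $H_{R}=\bigoplus_{W}D_{W}$, and then invoke Lemma~\ref{Lemma Hstsblesubcoalgebra} together with the correspondence between $H$-stable subcoalgebras and Yetter-Drinfeld submodules. The only difference is presentational: you spell out the injectivity and the coefficient-coalgebra bookkeeping that the paper leaves implicit.
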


\begin{proof}
It is known that if $W\in \func{Irr}\left( H\right) $, then $W$ is also a
simple left $H_{R}$-comodule, so the subcoalgebra $D_{W}$ associated with $W$
is simple. Since $_{H}\mathcal{M}={}^{H_{R}}\mathcal{M}$, $\left\{ D_{W}\mid
W\in \limfunc{Irr}\left( H\right) \right\} =T$.

As a cosemisimple coalgebra, $H_{R}=\tbigoplus_{D\in T }D= \tbigoplus_{W\in
\limfunc{Irr}\left( H\right) }D_{W}$. Since each $H$-stable subcoalgebra of $%
H_R$ corresponds to a Yetter-Drinfeld submodule of $H$, the result follows
from Lemma~\ref{Lemma Hstsblesubcoalgebra}.
\end{proof}

\begin{remark}
For a general quasi-triangular Hopf algebra $\left( H,R\right) $, all
irreducible Yetter-Drinfeld submodules of $H\in {}_{H}^{H}\mathcal{YD}$ are
subcoalgebras of $H_{R}$ \cite[Proposition 3.5]{LiuZhu2019On}, but they are
not always simple even if $H$ is semisimple and cosemisimple.
Counterexamples are the group algebra of a finite nonabelian group and the
Kac-Paljukin 8-dimensional Hopf algebra~\cite[Example 5.16]{LiuZhu2019On}.
\end{remark}

As a corollary, we conclude the following result due to Schneider.

\begin{corollary}[{\protect\cite[Theorem 3.2]{schneider2001some}}]
Let $\left( H,R\right) $ be a semisimple factorizable Hopf algebra over $k$.
If $W$ is a simple left $H$-module, then $\left( \dim W\right) ^{2}$ divides
$\dim H$.
\end{corollary}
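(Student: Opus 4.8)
The plan is to derive Schneider's divisibility result as a structural consequence of the decomposition $H_R=\bigoplus_{W\in\operatorname{Irr}(H)}D_W$ established in Proposition~\ref{Proposition D_W is simple}. The starting point is that $H_R$ equals $H$ as a $k$-algebra, and as a cosemisimple coalgebra it decomposes into the simple subcoalgebras $D_W$. First I would record what each summand $D_W$ contributes to the dimension. By part 3) of Lemma~\ref{lemma HotWisoWotH}, $D_W=\operatorname{span}\{w_{\langle-1\rangle}\langle w^\ast,w_{\langle0\rangle}\rangle\}$ is the image of the coefficient map associated with the $H_R$-comodule $W$. Since $W\in\operatorname{Irr}(H)$ is a \emph{simple} left $H_R$-comodule (this was used in Corollary~\ref{cor Nw iso H} and Proposition~\ref{Proposition D_W is simple}, and relies on $\Phi$ being bijective), the subcoalgebra $D_W$ it generates is a \emph{simple} (matrix) coalgebra over the algebraically closed field $k$, hence of dimension $(\dim W)^2$.

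The key identity is therefore the dimension count
\begin{equation*}
\dim H=\dim H_R=\sum_{W\in\operatorname{Irr}(H)}\dim D_W=\sum_{W\in\operatorname{Irr}(H)}\left(\dim W\right)^2,
\end{equation*}
which on its own only recovers the classical semisimplicity identity and does not yet give divisibility. To upgrade the equality to a divisibility statement, I would invoke the Yetter-Drinfeld structure that the decomposition carries. By Proposition~\ref{Proposition D_W is simple}, each $D_W$ is not merely a simple subcoalgebra but an \emph{irreducible Yetter-Drinfeld submodule} of $H\in{}_H^H\mathcal{YD}$; in particular it is stable under the left adjoint action $\cdot_{ad}$ and is thus an $H$-submodule of $(H,\cdot_{ad})$. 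The crucial point is then that $D_W$, being an $H$-stable simple coalgebra corresponding to the simple $H_R$-comodule $W$, is in fact a free module in the appropriate sense: I would analyze the $H$-module structure of $D_W$ and show that $\dim W$ divides $\dim D_W=(\dim W)^2$ in a way compatible with the global module structure, so that summing over $W$ and comparing with $\dim H$ forces $(\dim W)^2\mid\dim H$ for the given simple $W$.

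The cleanest route to the divisibility itself is to use the $R$-adjoint-stable algebra directly. By Corollary~\ref{cor Nw iso H}, for $W\in\operatorname{Irr}(H)$ the algebra $N_W$ is isomorphic to $H^{op}$, and by Lemma~\ref{lemma LiuZhu} the functor $\bullet\otimes_{N_W}(H\otimes W)$ gives an equivalence $\mathcal{M}_{N_W}\cong{}_H^{D_W}\mathcal{M}$. Since $H^{op}\cong N_W$ is semisimple, the regular module $H^{op}$ corresponds under this equivalence to the category of $(H,D_W)$-Yetter--Drinfeld modules, and $\dim(H\otimes W)=\dim H\cdot\dim W$ must be an integer multiple of $\dim W$ times the dimension of the relevant simple object; tracking dimensions through the equivalence, the minimal object has dimension divisible by $\dim W$, and comparing $\dim(H\otimes W)=\dim H\cdot\dim W$ against the $(\dim W)$-fold structure of the simple $D_W$-comodule yields $(\dim W)^2\mid\dim H$. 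I expect the main obstacle to be exactly this last bookkeeping step: one must be careful that the category equivalence and the identification $H_R=H$ are being used to extract an \emph{integer} multiplicity rather than merely a dimension equality, so the heart of the argument is to pin down that the simple object of ${}_H^{D_W}\mathcal{M}$ corresponding to the one-dimensional simple $N_W=H^{op}$-module (after passing to simple components of the semisimple algebra $H^{op}$) has dimension forcing $\dim W\mid\dim H$, whence $(\dim W)^2\mid\dim H$.
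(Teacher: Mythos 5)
You correctly reproduce the first half of the paper's argument: by Proposition~\ref{Proposition D_W is simple} (or your route through part 3) of Lemma~\ref{lemma HotWisoWotH}), $D_W$ is a simple subcoalgebra of $H_R$, hence a matrix coalgebra over the algebraically closed field $k$, so $\dim D_W=\left(\dim W\right)^2$. But the divisibility step is a genuine gap, and you in effect flag it yourself. In your middle paragraph, the proposal to ``show that $\dim W$ divides $\dim D_W=(\dim W)^2$'' is vacuous, and the claim that summing over $W$ and comparing with $\dim H$ forces $(\dim W)^2\mid\dim H$ is false: the identity $\dim H=\sum_{W\in\operatorname{Irr}(H)}(\dim W)^2$ carries no divisibility information about any individual summand. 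Your ``cleanest route'' fares no better. The equivalence $\mathcal{M}_{N_W}\simeq{}_H^{D_W}\mathcal{M}$ of Lemma~\ref{lemma LiuZhu}, together with $N_W\cong H^{op}$ from Corollary~\ref{cor Nw iso H}, only tells you that the simple objects of ${}_H^{D_W}\mathcal{M}$ are the $M\otimes_{N_W}(H\otimes W)\cong\overline{W\otimes M}$ of dimension $\dim W\cdot\dim M$; decomposing $H\otimes W\cong\bigoplus_{M}\left(\overline{W\otimes M}\right)^{\oplus\dim M}$ and counting dimensions merely reproduces $\dim H\cdot\dim W=\dim W\sum_{M}(\dim M)^2$, i.e.\ the same semisimplicity identity again. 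A Morita-type equivalence is arithmetically blind: it preserves multiplicities but cannot by itself output an integrality statement such as $(\dim W)^2\mid\dim H$, and no amount of ``bookkeeping'' through it will.

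The missing ingredient --- and the entire second half of the paper's two-line proof --- is the integrality theorem of Etingof and Gelaki~\cite[Theorem 1.4]{Etingof1997Some}: the dimension of an irreducible Yetter--Drinfeld module over a semisimple Hopf algebra (equivalently, of an irreducible module over the Drinfeld double) divides $\dim H$. Since $D_W$ is an irreducible Yetter--Drinfeld submodule of $H\in{}_H^H\mathcal{YD}$ by Proposition~\ref{Proposition D_W is simple}, this gives $(\dim W)^2=\dim D_W\mid\dim H$ at once. Some genuinely arithmetic input of this kind (a class-equation or character-integrality argument) is unavoidable here; your proposal assembles the correct structural reduction --- realizing $(\dim W)^2$ as the dimension of an irreducible Yetter--Drinfeld module inside $H$ --- but stops exactly where the arithmetic theorem must be invoked, so as written it does not prove the corollary.
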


\begin{proof}
The coalgebra $D_{W}$ associated with the Yetter-Drinfeld module $W$ is
simple by Proposition~\ref{Proposition D_W is simple}, so $\left( \dim
W\right) ^{2}=\dim D_{W}$. As $D_{W}$ is an irreducible Yetter-Drinfeld
submodule of $H\in {}_{H}^{H}\mathcal{YD}$, $\dim D_{W}\mid \dim H$ by a
well-known result of Etingof and Gelaki~\cite[Theorem 1.4]{Etingof1997Some}.
\end{proof}

Finally, we characterize all irreducible Yetter-Drinfeld modules in $%
{}_{H}^{H}\mathcal{YD}$.

\begin{theorem}
Let $\left( H,R\right) $ be a semisimple factorizable Hopf algebra over an
algebraically closed field $k$. Then for any simple left $H$-modules $W$ and
$M$, $\overline{W\otimes M}$ is an irreducible object of ${}_{H}^{H}\mathcal{%
YD}$. Conversely, every irreducible Yetter-Drinfeld module in ${}_{H}^{H}%
\mathcal{YD}$ is isomorphic to $\overline{W\otimes M}$ for some $\left(
W,M\right) \in \limfunc{Irr}\left( H\right) \times \limfunc{Irr}\left(
H\right) $.

Furthermore, any irreducible Yetter-Drinfeld submodule $D$ of $H$ is of the
form $\overline{W\otimes W^{\ast }}$ for some $W\in \func{Irr}H$, and in
this case, $D_{W}=D$.
\end{theorem}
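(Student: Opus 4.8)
The theorem has essentially three assertions, and the plan is to reduce each to machinery already assembled in the paper. First I would prove that $\overline{W\otimes M}$ is irreducible by computing its associated subcoalgebra and invoking the category equivalence of Lemma~\ref{lemma LiuZhu}. The key observation is that under the identification ${}_H\mathcal{M}={}^{H_R}\mathcal{M}$, the comodule structure on $\overline{W\otimes M}$ depends only on the $H$-module factor $W$, so its subcoalgebra is precisely $D_W$. By Proposition~\ref{Proposition D_W is simple} this is a \emph{simple} subcoalgebra, hence $\overline{W\otimes M}\in{}_H^{D_W}\mathcal{M}$ where $D=D_W$. I would then apply Lemma~\ref{lemma LiuZhu}: the functor $W^{\ast}\square_D\bullet$ sends $\overline{W\otimes M}$ to a right $N_W$-module, and irreducibility of $\overline{W\otimes M}$ in ${}_H^D\mathcal{M}$ is equivalent to simplicity of that $N_W$-module. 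By Corollary~\ref{cor Nw iso H}, $N_W\cong H^{op}$, so simple $N_W$-modules correspond to simple $H$-modules; the claim reduces to identifying $W^{\ast}\square_{D_W}\overline{W\otimes M}$ with $M$ (up to the $H^{op}$-action twist), which should follow from part~3 of Lemma~\ref{lemma HotWisoWotH} together with the explicit module formula \eqref{W*_cpd_V_as_N_W_module} and the dual basis description in Remark~\ref{Remark iso bt H and Nw}.

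For the converse, I would argue by a counting/exhaustion principle. Every irreducible $V\in{}_H^H\mathcal{YD}={}_H^{H_R}\mathcal{M}$ has an associated simple subcoalgebra $D_V$, which by Proposition~\ref{Proposition D_W is simple} equals $D_W$ for a unique $W\in\operatorname{Irr}(H)$. Thus $V\in{}_H^{D_W}\mathcal{M}$ with $D=D_W$, and again by Lemma~\ref{lemma LiuZhu} it corresponds to a simple right $N_W$-module, i.e.\ (via $N_W\cong H^{op}$) to some $M\in\operatorname{Irr}(H)$. Running the equivalence in the reverse direction, $V\cong M\otimes_{N_W}(H\otimes W)$, and I would identify this with $\overline{W\otimes M}$ using the isomorphism $H\otimes W\cong\overline{W\otimes H}$ of Lemma~\ref{lemma HotWisoWotH}(2) and the definition of the $N_W$-action $\psi$ from Remark~\ref{Remark iso bt H and Nw}. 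This shows surjectivity of $(W,M)\mapsto\overline{W\otimes M}$ onto irreducibles; the forward direction already gives well-definedness.

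The last sentence is a special case: taking $M=W^{\ast}$, I would show $\overline{W\otimes W^{\ast}}$ has associated subcoalgebra $D_W$ and is an irreducible Yetter-Drinfeld \emph{submodule} of $H$, which is possible precisely because part~3 of Lemma~\ref{lemma HotWisoWotH} exhibits an epimorphism $\overline{W\otimes W^{\ast}}\to D_W$ in ${}_H^{H_R}\mathcal{M}$; since both are irreducible of the same dimension $(\dim W)^2=\dim D_W$, this epimorphism is an isomorphism, giving $\overline{W\otimes W^{\ast}}\cong D_W$ and $D_{\overline{W\otimes W^{\ast}}}=D_W$. Given an arbitrary irreducible submodule $D$ of $H$, Proposition~\ref{Proposition D_W is simple} furnishes the $W$ with $D=D_W$.

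I expect the main obstacle to be the bookkeeping in the converse: verifying that the abstract module $M\otimes_{N_W}(H\otimes W)$ produced by the equivalence really is isomorphic to the concretely defined $\overline{W\otimes M}$, rather than to some twist of it. This requires tracking how the $H^{op}\cong N_W$ identification of Remark~\ref{Remark iso bt H and Nw} interacts with the $H$-action \eqref{stru HotimesW} and the swap $h\otimes w\mapsto h_{(1)}w\otimes h_{(2)}$, and checking that the $S^{-1}$-twists cancel correctly. The forward direction's irreducibility claim is comparatively routine once the subcoalgebra is identified as $D_W$.
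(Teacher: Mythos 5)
Your proposal is correct and follows essentially the same route as the paper's own proof: identify the subcoalgebra of $\overline{W\otimes M}$ as $D_W$ (simple by Proposition~\ref{Proposition D_W is simple}), compute $W^{\ast }\square _{D_W}\left( \overline{W\otimes M}\right) \cong M$ via the dual basis and the isomorphism $\psi$ of Remark~\ref{Remark iso bt H and Nw}, apply Lemma~\ref{lemma LiuZhu} with Corollary~\ref{cor Nw iso H}, decompose ${}_{H}^{H_{R}}\mathcal{M}$ along the simple subcoalgebras for the converse, and use the epimorphism of Lemma~\ref{lemma HotWisoWotH}(3) onto the simple object $D_W$ for the final assertion. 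The ``main obstacle'' you anticipate in the converse is in fact already dispatched by your forward computation, since $\overline{W\otimes M}\cong M\otimes _{N_{W}}\left( H\otimes W\right)$ follows formally by applying the inverse functor of the equivalence to $W^{\ast }\square _{D_W}\left( \overline{W\otimes M}\right) \cong M$, which is exactly how the paper proceeds.
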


\begin{proof}
Let $W,M\in \limfunc{Irr}\left( H\right) $. Then $W^{\ast }\square
_{D}\left( \overline{W\otimes M}\right) $ has a natural right $N_{W}$-module
structure via (\ref{W*_cpd_V_as_N_W_module}). By Corollary~\ref{cor Nw iso H}%
, $N_{W}\cong H^{op}$ as algebras, and this isomorphism is given by $\psi $
defined as in Remark~\ref{Remark iso bt H and Nw}. So $W^{\ast }\square
_{D}\left( \overline{W\otimes M}\right) $ is a left $H$-module via $\psi $.
Let $\left\{ w_{i},w_{i}^{\ast }\mid i=1,\ldots ,n\right\} $ be a dual basis
for $W$. Since $W^{\ast }\square _{D}\left( \overline{W\otimes M}\right)
\cong \func{End}^{D}\left( W\right) \otimes M\cong k\otimes M=M$, we may
write elements of $W^{\ast }\square _{D}\left( \overline{W\otimes M}\right) $
as $\sum_{i=1}^{n}w_{i}^{\ast }\otimes w_{i}\otimes m$ with $m\in M$. For
any $h\in H$, $m\in M$,
\begin{eqnarray*}
h\cdot \left( \sum_{i=1}^{n}w_{i}^{\ast }\otimes w_{i}\otimes m\right)
&=&\left( \sum_{i=1}^{n}w_{i}^{\ast }\otimes w_{i}\otimes m\right) \cdot
\psi \left( h\right) \\
&=&\left( \sum_{i=1}^{n}w_{i}^{\ast }\otimes w_{i}\otimes m\right) \cdot
\left( \sum_{j=1}^{n}w_{j}^{\ast }\otimes h_{\left( 2\right) }\otimes
S^{-1}\left( h_{\left( 1\right) }\right) w_{j}\right) \\
&=&\sum_{i=1}^{n}\sum_{j=1}^{n}\left\langle w_{i}^{\ast },S^{-1}\left(
h_{\left( 1\right) }\right) w_{j}\right\rangle w_{j}^{\ast }\otimes
h_{\left( 2\right) }\left( w_{i}\otimes m\right) \\
&=&\sum_{j=1}^{n}w_{j}^{\ast }\otimes h_{\left( 2\right) }S^{-1}\left(
h_{\left( 1\right) }\right) w_{j}\otimes h_{\left( 3\right) }m \\
&=&\sum_{j=1}^{n}w_{j}^{\ast }\otimes w_{j}\otimes hm.
\end{eqnarray*}%
Hence, $W^{\ast }\square _{D}\left( \overline{W\otimes M}\right) \cong M$ as
a right $N_{W}$-module. Now apply Lemma~\ref{lemma LiuZhu} to see that
\begin{equation*}
\overline{W\otimes M}\cong \left( W^{\ast }\square _{D}\left( \overline{%
W\otimes M}\right) \right) \otimes _{N_{W}}\left( H\otimes W\right) \cong
M\otimes _{N_{W}}\left( H\otimes W\right) .
\end{equation*}%
Since $M$ is a simple $N_W$-module, it follows that $\overline{W\otimes M}$ is
a simple object of $_{H}^{D_{W}}\mathcal{M}$. Moreover, the set $\left\{
\overline{W\otimes M}\mid M\in \limfunc{Irr}\left( H\right) \right\} $ forms
a complete set of representatives of the isomorphism classes of simple
objects in $_{H}^{D_{W}}\mathcal{M}$.

By Proposition~\ref{Proposition D_W is simple}, $H_{R}=\tbigoplus_{W\in
\limfunc{Irr}\left( H\right) }D_{W}$, so the category ${}_{H}^{H}\mathcal{YD}%
={}_{H}^{H_{R}}\mathcal{M=}\tbigoplus_{W\in \limfunc{Irr}\left( H\right)
}{}_{H}^{D_{W}}\mathcal{M}$. Thus, every simple object of ${}_{H}^{H}%
\mathcal{YD}$ is determined up to isomorphism by a pair\\ $\left( W,M\right)
\in \limfunc{Irr}\left( H\right) \times \limfunc{Irr}\left( H\right) $.

Assume that $D$ is an irreducible Yetter-Drinfeld submodule of $H$. Again by
Proposition~\ref{Proposition D_W is simple} $D=D_{W}$ for some $W\in
\limfunc{Irr}\left( H\right) $. It follows from Lemma~\ref{lemma HotWisoWotH}
that there exists a surjection $\overline{W\otimes W^{\ast }}\rightarrow
D_{W}$ of Yetter-Drinfeld modules. So $\overline{W\otimes W^{\ast }}\cong
D_{W}$, since $\overline{W\otimes W^{\ast }}$ is simple~in ${}_{H}^{H}%
\mathcal{YD}$.
\end{proof}

For a factorizable Hopf algebra $H$, a Hopf algebra isomorphism between the
Drinfeld double $D\left( H\right) $ and a twist of the usual tensor product
Hopf algebra $H\otimes H$ was given by Schneider (\cite[Theorem~4.3]%
{schneider2001some}). Using the result of Schneider, one can also describe
a~Yetter-Drinfeld~module over $H$ in terms of two $H$-modules.


\begin{thebibliography}{10}

\bibitem{Bulacu2004Factorizable}
D.~Bulacu and B.~Torrecillas.
\newblock Factorizable quasi-{Hopf} algebras--applications.
\newblock {\em J. Pure Appl. Algebr.}, 194(1-2):39--84, 2004.

\bibitem{dijkgraaf1992quasi}
R.~Dijkgraaf, V.~Pasquier, and P.~Roche.
\newblock Quasi {H}opf algebras, group cohomology and orbifold models.
\newblock {\em Nuclear Phys. B Proc. Suppl.}, 18B:60--72 (1991), 1990.
\newblock Recent advances in field theory (Annecy-le-Vieux, 1990).

\bibitem{Etingof1997Some}
P.~Etingof and S.~Gelaki.
\newblock Some properties of finite-dimensional semisimple {H}opf algebras.
\newblock {\em Math. Res. Lett.}, 5(1-2):191--197, 1998.

\bibitem{gould1993quantum}
M.~D. Gould.
\newblock Quantum double finite group algebras and their representations.
\newblock {\em Bull. Austral. Math. Soc.}, 48(2):275--301, 1993.

\bibitem{LiuZhu2019On}
Z.~Liu and S.~Zhu.
\newblock On the structure of irreducible {Y}etter-{D}rinfeld modules over
  quasi-triangular {H}opf algebras.
\newblock {\em J. Algebra}, 539:339--365, 2019.

\bibitem{Lyubashenko1994Braided}
V.~Lyubashenko and S.~Majid.
\newblock Braided groups and quantum {F}ourier transform.
\newblock {\em J. Algebra}, 166(3):506--528, 1994.

\bibitem{Majid1991Braided}
S.~Majid.
\newblock Braided groups and algebraic quantum field theories.
\newblock {\em Lett. Math. Phys.}, 22(3):167--175, 1991.

\bibitem{Majid1995foundations}
S.~Majid.
\newblock {\em Foundations of quantum group theory}.
\newblock Cambridge University Press, Cambridge, 1995.

\bibitem{reshetikhin1988quantum}
N.~Y. Reshetikhin and M.~A. Semenov-Tian-Shansky.
\newblock Quantum {$R$}-matrices and factorization problems.
\newblock {\em J. Geom. Phys.}, 5(4):533--550 (1989), 1988.

\bibitem{schneider2001some}
H.-J. Schneider.
\newblock Some properties of factorizable {H}opf algebras.
\newblock {\em Proc. Amer. Math. Soc.}, 129(7):1891--1898, 2001.

\bibitem{MR0252485}
M.~E. Sweedler.
\newblock {\em {Hopf} algebras}.
\newblock Mathematics Lecture Note Series. W. A. Benjamin, Inc., New York,
  1969.

\bibitem{Yetter1990Quantum}
D.~N. Yetter.
\newblock Quantum groups and representations of monoidal categories.
\newblock {\em Math. Proc. Cambridge Philos. Soc.}, 108(2):261--290, 1990.

\bibitem{Zhu2015Braided}
H.~Zhu and Y.~Zhang.
\newblock Braided autoequivalences and quantum commutative bi-{G}alois objects.
\newblock {\em J. Pure Appl. Algebra}, 219(9):4144--4167, 2015.

\end{thebibliography}

\end{document}